 \newtheorem{theorem}{Theorem}[section]
 \newtheorem{corollary}[theorem]{Corollary}
 \newtheorem{lemma}[theorem]{Lemma}
\theoremstyle{definition}
\theoremstyle{remark}
\newtheorem{rem}[theorem]{Remark}
\newcommand\dd{\mathrm d}
\newcommand{\C}{\mathbb{C}}
\newcommand{\U}{\mathcal{U}}
\newcommand{\R}{\mathbb{R}}
\newcommand{\cc}[1]{\overline{#1}}
\newcommand{\abs}[1]{\left\vert#1\right\vert}
\newcommand{\norm}[1]{\left\Vert#1\right\Vert}
\newcommand{\ad}{^\ast}
\newcommand{\inv}{^{-1}}
\newcommand{\til}{\raise.17ex\hbox{$\scriptstyle\mathtt{\sim}$}}
\newcommand{\ph}{\varphi}
\newcommand\la{\lambda}
\newcommand\beq{\begin{equation}}
\newcommand\eeq{\end{equation}}
\newcommand\black{\color{black}}
\newcommand\bbm{\begin{bmatrix}}
\newcommand\ebm{\end{bmatrix}}
\newcommand\bpm{\begin{pmatrix}}
\newcommand\epm{\end{pmatrix}}
\numberwithin{equation}{section}
\newlength{\Mheight}
\newlength{\cwidth}
\newcommand{\SC}{\mathbb{H}(\C)^g}
\newcommand{\SCk}{\mathbb{H}\left(\mathbb{C}^{\kappa \times \kappa}\right)}
\newcommand{\dfn}[1]{{\bf #1}\index{#1}}
\newcommand{\phax}{\ph^{A, X}_v}
\title[Convex entire noncommutative functions]{Convex entire noncommutative functions are polynomials of degree two or less}
\author[J.W. Helton]{J. William Helton$^1$}
\address{ J. William Helton, Mathematics
Department \\ University of California at San Diego 92093}
\email{helton@math.ucsd.edu}
\thanks{$^1$ Partly supported by NSF grant  DMS-1201498  and the Ford Motor Co.}
\author[J. E. Pascoe]{J. E. Pascoe$^2$}
\address{ J. E. Pascoe, Mathematics
Department \\ University of California at San Diego 92093}
\email{jpascoe@math.ucsd.edu}
\thanks{$^2$ Partially supported by NSF grant DMS-1361720}
\author[R. Tully-Doyle]{Ryan Tully-Doyle}
\address{ R. Tully-Doyle, Mathematics
Department \\ University of California at San Diego 92093}
\email{rtullydo@gmail.com}
\author[V. Vinnikov]{Victor  Vinnikov$^3$ }
\address{V. Vinnikov, Department of
Mathematics\\ Ben Gurion Univ. of the Negev, Beer Sheva\\Israel}
\email{vinnikov@math.bgu.ac.il}
\thanks{$^3$ Partially supported by the Israel Science Foundation (Grant No.~322/00)}
\date{\today}
\begin{document}

\begin{abstract}

This paper concerns matrix ``convex'' functions of (free) noncommuting variables, $x = (x_1, \ldots, x_g)$.  It was shown in \cite{helmconvex04} that a polynomial in $x$ which is matrix convex is of degree two or less. We prove a more general result: that a function of $x$ that is matrix convex near $0$ and also that is ``analytic'' in some neighborhood of the set of all self-adjoint matrix tuples is in fact a polynomial of degree two or less.

More generally, we prove that a function $F$ in two classes of noncommuting variables, $a = (a_1, \ldots, a_{\tilde{g}})$ and $x = (x_1, \ldots, x_g)$ that is ``analytic'' and matrix convex in $x$ on a ``noncommutative open set'' in $a$ is a polynomial of degree two or less.

\black
\end{abstract}

\maketitle

\section{Introduction}

Helton and McCullough in \cite{helmconvex04} showed that if a noncommuting polynomial $p(x)$ in noncommuting 
(free) variables is matrix convex, then $p$ has degree two or less. In \cite{hhlm08}, Hay, Helton, Lim, and McCullough showed that a noncommuting polynomial $p(a,x)$ in two classes of noncommuting variables that is matrix convex (in $x$) has degree two or less. 
 The main result of this paper, Theorem \ref{main}, implies this conclusion for classes of convex noncommuting functions far more general than polynomials. In particular, we show this phenomenon holds for noncommuting  (free) ``entire'' functions.

Unlike other proofs of similar results which use either the positivity of the Hessian (combined with an appropriate sums of squares theorem), or a direct analysis of the Gram matrix, or a realization as some kind of transfer function, the approach here is to reduce the problem to the one variable situation by working on a slice and to exploit the classical integral representations for matrix convex and matrix monotone functions of one variable.

\subsection{Classical one variable functions}

In one variable, a self-adjoint matrix $B$ is diagonalizable and so can be written in the form
	$$B = U\ad \bpm \la_1 &  & &  \\  & \la_2 & & \\ & & \ddots &  \\  &  &  & \la_n \epm U.$$

Then we define evaluation of a function $f:\C \to \C$ at the matrix $B$ by
	$$f(B) = U\ad \bpm f(\la_1) &  &  &  \\  & f(\la_2) &  &  \\ & & \ddots &  \\  &  &  & f(\la_n) \epm U.$$

A function $f: (a,b) \rightarrow \mathbb{R}$ is called \dfn{matrix convex} if for any pair of Hermitian matrices of the same size $A$ and $B,$
	$$f(tA +(1-t)B)\preceq tf(A) + (1-t)f(B)$$
for all $t \in [0,1].$

The work of F. Kraus  \cite{kraus36} on functions of one variable led to the observation that classical matrix-convex functions $f: [-1,1] \rightarrow \mathbb{R}$ have the following representation for some probability measure $\mu$ on $[-1,1]:$
	\beq \label{krauseq} f(t) = f(0) + f'(0)t + \frac{1}{2}f''(0)\int^1_{-1} \frac{t^2}{1-\lambda t} d\mu(\lambda). \eeq
Conversely, any $f$ of the form \eqref{krauseq} must be matrix-convex.
Notably, the notion of matrix convexity is much stronger that the notions
of convexity and concavity in freshman calculus' for example, the function $f(x) = x^4$
is not matrix convex, as we will see later.

However, whenever a convex function is taken to be real entire, i.e. the function has an analytic continuation off the interval $[1,1]$ to an open set containing the real axis in $\C$, this puts stringent conditions on the support of the measure $\mu$ in representation \eqref{krauseq}. In fact, {\it a matrix convex function that is real entire is a polynomial of degree $2$ or less} (Lemma \ref{onedimension} below). In this article, we shall extend this to ``entire functions'' in several noncommuting variables.

\subsection{Non-quadratic matrix convex functions}

We note that there are functions more general than polynomials which are matrix convex in some region. For example, as a consequence of \eqref{krauseq} with the measure $\mu$ given by a point mass at $\frac{1}{2}$, the representation reduces to
	$$f(t) = \frac{t^2}{1- \frac{1}{2}t}.$$
In this case, $f$ is a function that is matrix-convex on matrices $-I\preceq X \preceq I$. There are many other examples in \cite{bha97} of functions which are matrix convex on intervals. Also, in several noncommuting variables, there are many nc  rational functions which are convex in a region. These are completely classified in \cite{helmcvin06}.  In another direction, variants of logarithms related to entropy and relative entropy are (even in several variables) matrix convex on positive matrices \cite{eff09, neg11}. Applications and new proofs can be found in \cite{auj11, neg13}. However, in keeping with our results here, none of these examples can extend to be real entire and thus each exhibits some kind of singular behavior.

\subsection{Noncommuting Variables, Sets and Polynomials}

This article is concerned with matrix convex functions in several noncommuting  variables, to be defined later. To do this, we will need to define and describe functions and sets in noncommutative variables. Often we will abbreviate the term noncommutative by \dfn{nc}.

\subsubsection{NC variables and NC polynomials}

Let $z = (z_1 \, \ldots, z_g)$ denote a tuple of noncommuting variables. 
Let $\C\langle z \rangle$ denote the nc polynomials 
in the variables $z_1 \, \ldots, z_g.$
For example, $z_1z_2 + 7z_2z_1 + z_1^2$ and $z_2z_1 + 7z_2z_1+ z_1^2$ 
are nc polynomials
in $z_1$ and $z_2$ that are not equal since the variables do not
commute. 
A polynomial $p \in \C\langle z \rangle$ will be denoted by
	$$p(z) = \sum_w^{\text{finite}} p_w z^w,$$
over words $z^w$ in the letters $z$ with coefficients $p_w \in \C$. That is,
words are the multi-indicies in the nc setting.

There is a natural involution $\ad$ on the ring of nc polynomials 
$\C\langle z \rangle$ that reverses the order of a word. 
The involution is defined as follows: 
\begin{enumerate}
	\item 
	\label{it:hermvar}
		$z_i^* =z_i  $ for all $i = 1, \ldots, \tilde{g}$,
	\item $(pq)\ad = q\ad p\ad$ for $p, q \in \C\langle z \rangle$,  
	\item and  $(\lambda p)^* = \cc{\lambda}p^*$ for any
	  $\lambda\in\mathbb{C}$.
\end{enumerate}

The variables $z_i$ are called \dfn{Hermitian variables} because of 
condition \eqref{it:hermvar}
and they are called \dfn{free}
  because there
is no polynomial identity which they \emph{a priori} satisfy.

For example,
	$$(iz_1z_2 + 7z_2z_1 + z_1^2)^* = -z_2z_1 + 7 z_1z_2 + z_1^2.$$

A polynomial $p \in \C\langle z\rangle$ is a \dfn{Hermitian noncommuting polynomial} if
	$p\ad = p.$
For example, the polynomial
	$$8z_1z_2 + 8z_2z_1+z_1^2+z_2^{81} \text{ is Hermitian,}$$ but
$$8z_1z_2 + 6z_2z_1+z_1^2+z_2^{81} \text{ is not Hermitian.}$$

A polynomial $p$ in $g$ noncommuting variables $z = (z_1, \ldots, z_g)$ can be evaluated at tuples of hermitian matrices. Let $\mathbb H(\C^{n\times n})^{g}$ denote the set of Hermitian $n \times n$ matrices. For a word $z^w = z_{j_1}z_{j_2}\ldots z_{j_m}$ and a $g$-tuple of Hermitian matrices $Z = (Z_1, \ldots, Z_g) \in \mathbb H(\C^{n\times n})^{g}$, we define
	$$Z^w = Z_{j_1}Z_{j_2}\ldots Z_{j_m}$$
and for a polynomial $p \in \C\langle z \rangle$,
	$$p(Z) = \sum_{w}^{\text{finite}} p_w Z^w.$$
Note that for a Hermitian polynomial $p$, 
	$$p(Z) = p\ad(Z) = p(Z)\ad,$$
that is, we say that $p$ is {\bf evaluation Hermitian}.

A matrix-valued polynomial $p$ is a function
	$$p(z) = \bpm p_{11}(z) & \cdots & p_{1m}(z) \\ \vdots & \ddots & \vdots \\ p_{m1}(z) & \cdots & p_{mm}(z)\epm $$
where each entry $p_{ij} \in \C\langle z \rangle$. A matrix-valued polynomial $p$ is Hermitian if $p\ad = p$.
\black

\subsection{ NC Sets and NC Functions}\label{sec:ncset}

	\def\bbH{{\mathbb H}}
	Let $\mathbb H (\C^{n\times n})^g$ denote $g$-tuples of 
	self-adjoint 
	(also called Hermitian)
	$n \times n$ matrices over $\C$, that is
	$$\mathbb H (\C^{n\times n})^g = \{ X = (X_1, \ldots, X_g) \in (\C^{n\times n})^g : X_i = X_i\ad, i = 1, \ldots g\},$$
and let $\mathbb H (\C)^g$ be the (self-adjoint) matrix universe
	$$\mathbb H (\C)^g = \bigcup_{n = 1}^\infty \mathbb H (\C^{n\times n})^g.$$
A set $D \subset \SC$ is called a
 \dfn{unitarily invariant nc set} if it satisfies the following conditions.
 We shall only use unitarily invariant sets in this paper, so
 henceforth we refer to them as \dfn{nc sets}.

	\begin{description}
	\item[$D$ is closed with respect to direct sums] 
	\hfill 	
	
	if $Z = (Z_1, \ldots, Z_g) \in D$ and $\tilde{Z} = (\tilde{Z}_1, \ldots, \tilde{Z}_g) \in D$ then
	$$
		\bpm Z &  \\  & \tilde{Z} \epm = \left(\bpm Z_1 &  \\  & \tilde{Z}_1 \epm, \ldots, \bpm Z_g &  \\ & \tilde{Z}_g \epm\right) \in D.
	$$
	\item[$D$ is closed with respect to unitary equivalence]
	\hfill

	If $Z \in D\bigcap \mathbb H(\C^{n\times n})^g, U \in U_n$, then
	\[
	U\ad Z U = (U\ad Z_1 U, \ldots, U\ad Z_g U) \in D.
	\]
	\end{description}
Here, $U_n$ denotes the unitary matrices of size $n$. 

We define a norm $\norm{\cdot}$ on each set $\mathbb H(\C^{n\times n})^{g}$ by
	$$\norm{(X_1, \ldots, X_g)} = \text{max eigenvalue }(\sum_{i=1}^g X_i X_i\ad)^{1/2}.$$

Now we discuss nc functions.
Let $D$ be an nc set. Let $f: D \to \mathbb M(\C^{n\times n})$ be a function. 
We say that $f$ is an \dfn{nc function} if it satisfies the following conditions.
	\begin{description}
	\item[$f$ is graded] If $Z \in D \bigcap \mathbb H(\C^{n\times n})^g$, then $f(Z) \in \mathbb M(\C^{n\times n})$.
	\item[$f$ respects direct sums] If $Z, \tilde{Z} \in D$ then
	\[ 
	f \bpm Z &  \\  & \tilde{Z} \epm = \bpm f(Z) &  \\  & f(\tilde{Z}) \epm.
	\]
	\item[$f$ respects unitary equivalence] 
	\hfill

	If $U \in U_n,$ and $Z \in D\bigcap\mathbb H(\C^{n\times n})^g$, 
	\[
	f(U\ad Z U) = U\ad f(Z) U.
	\]
	\end{description}

A function $F$ on an nc set $D$ is a \dfn{matrix-valued nc function} if
	$$F(z) = \bbm F_{11}(z) & \cdots & F_{1N}(z) \\ 
	\vdots & \ddots & \vdots \\ F_{M1}(z) & \cdots & F_{MN}(z) \ebm,$$
where each $F_{ij}(z)$ is an nc function. $F$ is called \dfn{evaluation Hermitian} if for all matrix tuples $Z \in D$, we have $F(Z)\ad = F(Z)$. Note that Hermitian nc polynomials are examples of nc evaluation Hermitian functions.

\begin{rem} 
In \cite{vvw12}, nc functions were defined differently in that their domain was an nc set in the non-self-adjoint matrix universe, and they were required to respect any similarity that leaves the point inside the domain; see also the fully matricial functions of \cite{voi04, voi10}. However, if $Z \in \mathbb H(\C^{n\times n})^g$ and $T \in GL_n$ so that $TZT\inv \in \mathbb H(\C^{n\times n})^g$, then one can expect $T$ to be a unitary (up to a scalar). Hence for functions defined on nc sets consisting of $g$-tuples of self-adjoint matrices, as we consider here, there is no great difference between respecting similarity and respecting unitary equivalence. 
\end{rem}

\subsection{Functions in $a$ and $x$}

We now introduce an extra level of generality, 
in light of the engineering motivation for this subject (c.f. \cite{hmpv09}). 
Let $z = (a,x)$ be a $(\tilde{g} + g)$-tuple of Hermitian variables,
 so that $a = (a_1, \ldots, a_{\tilde{g}})$ and $x = (x_1 \ldots, x_g)$. Note that nc sets 
$D \subset \mathbb H(\C)^{\tilde{g}} \times \mathbb H(\C)^g$ 
and nc functions $F(a,x)$ have the same definitions as in Section \ref{sec:ncset} by simply thinking of $(a,x)$ as $z = (a, x)$. 

We shall define nc functions $F$ of the tuples of variables $a$ and $x$, where the function is convex in $x$ on a small set and real entire in the variable $x$ but not necessarily in $a$. To do so, we require appropriate notions of real entirety and real matrix convexity.

For a matrix tuple $A \in \SCk^{\tilde{g}}$, 
the \dfn{nc smallest A-set}
is the set $C_A \subset \mathbb H(\C^{\tilde{g}})$ with elements of the form
		$$\alpha = U\ad \bbm A & 0 & 0 \\ 0 & A & 0 \\ 0 & 0 & \ddots \ebm U,$$
	that is 
		\beq\label{zclosure}C_A = \bigcup_{m=1}^{\infty} \{U\ad(I_m \otimes A)U: U \in  U_{\kappa m}\},\eeq
where $U_{\kappa m}$ is the unitary matrices of dimension $\kappa m$.
The {nc smallest $A$-set} is the smallest nc set containing $A$
\footnote{A related notion might be thought of as the nc Zariski closure of a point $A$,
but this set can be bigger than $C_A$ and is not used in the paper.}
Alternatively, one can think of it as the nc set generated by $A$.

Note that $I_m \otimes A$ and $A \otimes I_m$ are permutation similar, 
and so for all $m$, we have $A \otimes I_m \in C_A$.
We denote by $(C_A, 0)$ the set 
	\beq (C_A,0) = \{(\alpha, 0): \alpha \in C_A\}.\eeq

\subsubsection{Analyticity}\label{analytic}
The general theory of nc analytic functions was initiated by J.L. Taylor in \cite{tay73}.
Subsequently, several notions of analyticity for nc functions have been studied, notably \cite{voi04, voi10, vvw12}.
Here we define nc analytic using nc power series near a point and ordinary commutative analytic 
continuation away from the point, as in \cite{helkm11}.

A formal \dfn{$x$-power series} is a sum
		\beq \sum_i F_i(a,x), \eeq
	where each $F_i$ is a matrix-valued nc polynomial homogenous in $x$ of degree $i$. 

For a matrix-valued nc function $F$ with a power series
	$$F(a, x) = \sum_i F_i(a, x)$$
that converges absolutely and uniformly on some nc set $D$, say that $F$ is \dfn{Hermitian} if each $F_i$ is a Hermitian matrix-valued polynomial. Note that $F$ is Hermitian implies that $F$ is evaluation Hermitian on $D$, that is
	$$F(A, X)\ad = F(A,X).$$

Let $A$ be a tuple $A \in \SCk^{\tilde{g}}$, and let $C_A$ be the nc smallest $A$-set. An nc function $F$ is called \dfn{$x$-real entire at $(C_A,0)$} if
	\begin{enumerate}
		\item \label{Xanal} there exists an $\varepsilon > 0$ and an $x$-power series $$F(a,x) = \sum_i F_i(a,x)$$ such that for each $\alpha = U\ad(I_m\otimes A)U \in C_A$,  
			\beq\label{maybe} F(\alpha, X) = \sum_i F_i(\alpha, X),\eeq
 converges on the open $x$-ball $$\{(\alpha, X): X\in \mathbb H(\C^{\kappa m \times \kappa m})^g, \norm{X} < \varepsilon\},$$
		\item\label{Xent}  and the matrix-valued analytic function $F$ in the $x$-ball near $(\alpha, 0)$ analytically continues as a function of $g(\kappa m)^2$ complex variables to an open set in $(\C^{\kappa m\times \kappa m})^g$ containing $\mathbb H(\C^{\kappa m \times \kappa m})^g$. 
	\end{enumerate}

\vspace{15pt}

An $M\times N$ matrix-valued nc function $F$ is $x$-real entire if each entry $F_{ij}$ is $x$-real entire. If each $F_{ij}$ has an $x$-power series in an $x$-ball of radius $\varepsilon_{ij}$ near $(C_A, 0)$, then for any $\alpha \in C_A$, $F$ has an $x$-power series
	$$F(\alpha, X) = \sum_i F_i(\alpha, X)$$
on an $x$-ball of radius $\varepsilon = \min_{i,j} \varepsilon_{ij}$, where each $F_i$ is a matrix-valued nc polynomial whose entries are homogeneous of degree $i$ in $x$. 

\begin{rem} 
	The notion of $x$-real entire requires two rather different notions of analyticity, \eqref{Xanal} and \eqref{Xent}. In our forthcoming proofs, we shall indicate when each notion is used. 
\end{rem}

\begin{rem}
In the case that there are only $x$ variables, our notion of $x$-real entire translates into the language of \cite[Section 7.1]{vvw12} as follows. Consider a matrix-valued nc function $F$ on the $x$-ball $\{X\in \SC: \norm{X} < \varepsilon\}$ in $\SC$ which extends to a locally bounded nc function (in the sense of \cite{vvw12}, i.e. respecting similarity that leaves the point in the domain) on an open (i.e. open in every matrix size) nc set $\Omega$ in the non-self-adjoint matrix universe. If $\Omega$ contains $\SC$, then $F$ is real entire at 0 in the sense of our definition.
\end{rem}

\subsubsection{Convexity}

 Now we introduce the notion of matrix convexity on a ball. Denote by $B_\varepsilon$ the $x$-ball of radius $\varepsilon$ 
	$$\mathcal B_\varepsilon = \{X \in \SC |\norm{X} < \varepsilon\},$$
where $\norm{X}$ is the operator norm on matrix tuples.
For each $n$, denote the $x$-ball on level $n$ by 
$$\mathcal B_{n,\varepsilon} = \mathcal B_\epsilon \cap \SCk^{\tilde{g}}.$$

 Following \cite{hhlm08}, we define a notion of partial convexity in $x$ for a function $f(a,x)$. Denote by $\U_\kappa$ a set in $\SCk^g$ containing $0$.
 Given a matrix $A \in \SCk^{\tilde{g}}$ and a set $\U_{\kappa}$, the set $\{A\} \times \U_{\kappa}$ is \dfn{$x$-open at $(A, 0)$} if $\U_{\kappa}$ contains a ball $\mathcal B_{\kappa, \varepsilon}$.

 For a matrix $A \in \SCk^{\tilde{g}}$, suppose that a set $\{A\}\times \U_\kappa$ is $x$-open near $(A,0)$.  A function $F(a,x)$ is \dfn{matrix convex in $x$} on $\{A\}\times \U_\kappa$ if $F$ is Hermitian and for all $X, Y \in \U_\kappa$ for which
	$$(A,tX) + (A,(1-t)Y) \in \U_\kappa \text{ for } 0\leq t\leq 1,$$
it follows that 
	$$F(A, tX + (1-t)Y)  \preceq tF(A,X) + (1-t)F(A,Y).$$

A function $F$ is \dfn{matrix convex in $x$ at $(A, 0)$} if there exists $\varepsilon > 0$ such that $F$ is matrix convex in $x$ on the set $\{A\}\times \mathcal B_{\kappa, \varepsilon}$. 
A function $F$ is \dfn{matrix convex in $x$ at $(C_A,0)$} if there exists a uniform $\varepsilon > 0$ such that for each $\alpha \in C_A$, $F$ is matrix convex in $x$ at $(\alpha, 0)$. That is, for each $\alpha = U\ad (I_m \otimes A) U$, $F$ is matrix convex on the set
	\beq\label{xball}\{(\alpha, X): X \in \mathbb H(\C^{\kappa m \times \kappa m})^g, \norm{X} < \varepsilon\}.\eeq

We emphasize that when we fix $A$, we also fix the dimension $\kappa$, which consequently determines the size of $X$. As a tuple $\alpha = U\ad(I_m \otimes A)U$ has dimension $\kappa m \times \kappa m$, consequently the tuples $X$ in \eqref{xball} have dimension $\kappa m \times \kappa m$.

Balasubramanian and McCullough proved that ``quasi-convex" noncommuting polynomials have degree two or less in \cite{balmc12}. Our results do not imply this. While this article discusses matrix convex functions, 
there is also work on nc convex sets. When an nc  convex set has an 
nc  polynomial defining function, it has a very rigid form (see \cite{heltmc12}).

\subsection{Main Result}

The main result of this paper is: 
\begin{theorem}\label{main}
Let $A$ be a matrix tuple in $\SCk^{\tilde{g}}$, and let $C_A$ be the nc smallest $A$-set. Suppose a Hermitian matrix-valued nc function $F$ in noncommuting tuples $a$ and $x$ is both matrix convex in $x$ and $x$-real entire at $(C_A,0)$. Then  
		$$F(\alpha,X) = F_0(\alpha,X) + F_1(\alpha, X) + F_2(\alpha, X)$$
	for each $\alpha \in C_A$ and all $X \in \mathbb H(\C^{\kappa m \times \kappa m})^g$, where $F_i$ are nc polynomials of degree $i$ in $X$.
\end{theorem}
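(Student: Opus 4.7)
The plan is to reduce to the one-variable setting by slicing $F$ along a single direction: working on the ray $X = tX_0$ turns matrix convexity of $F$ in $x$ into matrix convexity of a scalar function of the single real variable $t$, at which point Lemma~\ref{onedimension} applies. Fix $\alpha \in C_A$, a direction $X_0 \in \mathbb{H}(\mathbb{C}^{\kappa m \times \kappa m})^g$, and a vector $v$ in the underlying space of $F(\alpha, X_0)$. Introduce the scalar real-valued function
\[
\phi_v(t) := \langle F(\alpha, tX_0) v, v\rangle, \qquad t \in \mathbb{R}.
\]
By the $x$-real entire hypothesis $\phi_v$ extends to an entire function of $t$; the remaining task is to show that $\phi_v$ is matrix convex on an interval around $0$.

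The heart of the argument is to realize the matrix functional calculus $\phi_v(T)$ of a small Hermitian $T \in \mathbb{H}(\mathbb{C}^{k \times k})$ as a compression of $F$ evaluated at noncommutative arguments. Since $C_A$ is closed under direct sums, $I_k \otimes \alpha \in C_A$. Diagonalizing $T = UDU^*$ with $D = \mathrm{diag}(\lambda_1, \ldots, \lambda_k)$, respect for direct sums yields
\[
F(I_k \otimes \alpha,\ D \otimes X_0) \;=\; \bigoplus_{i=1}^k F(\alpha, \lambda_i X_0),
\]
and unitary invariance under $U \otimes I$ upgrades this to general $T$. A direct computation with the isometry $W e_i = e_i \otimes v$ then gives the key identity
\[
\phi_v(T) \;=\; W^* F(I_k \otimes \alpha,\ T \otimes X_0)\, W.
\]
Applying matrix convexity of $F$ in $x$ at $(I_k \otimes \alpha, 0)$ to $(tT+(1-t)S) \otimes X_0 = t(T \otimes X_0) + (1-t)(S \otimes X_0)$ and compressing by $W$ transfers the convex inequality to $\phi_v$, establishing its matrix convexity on the interval $(-\varepsilon/\|X_0\|, \varepsilon/\|X_0\|)$.

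With $\phi_v$ entire and matrix convex near zero, Lemma~\ref{onedimension} forces $\phi_v$ to be a polynomial in $t$ of degree at most two. Substituting the $x$-power series $F(\alpha, X) = \sum_i F_i(\alpha, X)$ gives $\phi_v(t) = \sum_i \langle F_i(\alpha, X_0) v, v\rangle\, t^i$, and matching coefficients shows $\langle F_i(\alpha, X_0) v, v\rangle = 0$ for all $i \ge 3$. Since this holds for every $v$ and $F_i(\alpha, X_0)$ is Hermitian, we obtain $F_i(\alpha, X_0) = 0$ for every $i \ge 3$, every $\alpha \in C_A$, and every $X_0$. The analytic continuation guaranteed by condition~(2) of $x$-real entire then propagates the identity $F = F_0 + F_1 + F_2$ from a neighborhood of $X = 0$ to all of $\mathbb{H}(\mathbb{C}^{\kappa m \times \kappa m})^g$.

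The main obstacle is the compression identity $\phi_v(T) = W^* F(I_k \otimes \alpha, T \otimes X_0)\, W$: one must carefully propagate the direct-sum and unitary-invariance axioms through the substitution $(I_k \otimes \alpha, T \otimes X_0)$, verifying both that $I_k \otimes \alpha$ lies in $C_A$ and that $\|T \otimes X_0\|$ stays within the ball where $F$ is matrix convex. In the matrix-valued setting a tensor-factor swap is required but introduces no essential new difficulty. Once the compression identity is in hand the rest of the argument is essentially mechanical.
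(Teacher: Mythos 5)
Your proposal is correct and follows essentially the same route as the paper's proof: slice $F$ along the rank-one direction $T \otimes X_0$ (the paper writes it as $X \otimes \xi$, but these differ only by the tensor-swap permutation), compress by a fixed vector to get a scalar one-variable function, verify that this slice is both real entire and matrix convex by pushing the direct-sum and unitary-invariance axioms through the compression, and then invoke Lemma~\ref{onedimension} together with homogeneity of the $F_i$ to kill all terms of degree $\geq 3$. The only cosmetic differences are that you center the convexity interval at $T = 0$ with radius $\varepsilon/\|X_0\|$ rather than at $T = I$ with $\|X\| < \varepsilon/2$ fixed, and that you make explicit the compression identity $\phi_v(T) = W^* F(I_k \otimes \alpha, T \otimes X_0) W$, which the paper uses implicitly.
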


\begin{proof}
See Section \ref{mainproof}.
\end{proof}

The special case which involves only $x$ variables is:

\begin{corollary}
Suppose that a Hermitian, matrix-valued nc function $F$ is both matrix convex and real entire near the origin. Then
	$$F(X) = F_0(X) + F_1(X) + F_2(X)$$
for all $X \in \SC$, where $F_i$ are matrix-valued nc polynomials of degree $i$ in $X$.
\end{corollary}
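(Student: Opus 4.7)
The strategy is to derive the corollary as a direct specialization of Theorem \ref{main} to the case with no $a$ variables. To match the form of the theorem, which requires an auxiliary tuple $A$, I would introduce a trivial $a$-variable by taking $\tilde{g} = 1$, $\kappa = 1$, and $A = (0) \in \mathbb{H}(\mathbb{C})^{1}$. With this choice the nc smallest $A$-set becomes $C_A = \bigcup_{m \geq 1}\{0_{m\times m}\}$, since $U\ad(I_m \otimes 0)U = 0$ for every unitary $U$. In particular every matrix size appears as the $a$-component of some $(\alpha, 0) \in (C_A, 0)$, and $\kappa m = m$, so that in the conclusion of Theorem \ref{main} the $x$-argument sweeps out all of $\SC$.

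Next, lift $F$ to $\tilde F(a, x) := F(x)$, a function that is constant in $a$. The hypotheses on $F$ transfer immediately: $\tilde F$ is Hermitian and, by the phrase ``real entire and matrix convex near the origin,'' there is a single $\varepsilon > 0$ simultaneously giving convergence of the power series of $F$ on $\mathcal{B}_\varepsilon$, matrix convexity of $F$ on $\mathcal{B}_\varepsilon$, and analytic continuation of $F$ from $\mathcal B_\varepsilon \cap \mathbb{H}(\mathbb{C}^{m\times m})^g$ to an open set in $(\mathbb{C}^{m\times m})^g$ containing $\mathbb{H}(\mathbb{C}^{m\times m})^g$ for every $m$. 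Since every $\alpha \in C_A$ is the zero tuple and $(\alpha, X) = (0, X)$ is literally the origin in $x$, these conditions exactly say that $\tilde F$ is matrix convex in $x$ at $(C_A, 0)$ and $x$-real entire at $(C_A, 0)$ in the sense of the paper.

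Theorem \ref{main} applied to $\tilde F$ then produces matrix-valued nc polynomials $\tilde F_0$, $\tilde F_1$, $\tilde F_2$ of degrees $0,1,2$ in $x$ satisfying
\[
\tilde F(\alpha, X) = \tilde F_0(\alpha, X) + \tilde F_1(\alpha, X) + \tilde F_2(\alpha, X)
\]
for every $\alpha \in C_A$ and every $X \in \mathbb{H}(\mathbb{C}^{m\times m})^g$. Because $\tilde F$ is constant in $a$, evaluating at $\alpha = 0$ gives matrix-valued nc polynomials $F_i(X) := \tilde F_i(0, X)$ of degree $i$ in $X$ with $F(X) = F_0(X) + F_1(X) + F_2(X)$ for all $X \in \SC$, as desired.

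The only step requiring care is verifying that the $(C_A, 0)$-versions of analyticity and convexity, applied to the constant-in-$a$ function $\tilde F$, unwind into the one-variable notions stated in the corollary; this is essentially definition chasing, since every point of $C_A$ is the zero tuple and hence every ``neighborhood of $(C_A, 0)$'' coincides, at each matrix size, with a standard $x$-ball about the origin.
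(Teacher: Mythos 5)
Your proposal is correct and matches the intended argument: the paper presents this corollary as an immediate ``special case which involves only $x$ variables'' of Theorem \ref{main} without giving a separate proof, and your device of adjoining a trivial variable $a$ with $\tilde g = 1$, $\kappa = 1$, $A = (0)$ (so that $C_A$ is the collection of zero tuples of every size and $\kappa m = m$ sweeps out all of $\SC$) is precisely the clean way to make that specialization rigorous. The only point worth noting is that you should also observe, as you implicitly do, that the lifted function $\tilde F(a,x) = F(x)$ is genuinely an nc function (it respects direct sums and unitary conjugation because $F$ does), and that since $\tilde F$ is constant in $a$ the homogeneous pieces $\tilde F_i$ produced by the theorem carry no $a$-dependence, so $F_i(X) := \tilde F_i(0,X)$ are honest nc polynomials homogeneous of degree $i$ in $X$; both checks are definition chasing, as you say.
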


\begin{corollary}\label{maincor}
Let $\mathcal U \subset \mathbb H (\C)^{\tilde{g}}$ be an nc open set and let $F: \mathbb H(\C)^{\tilde{g}} \times \SC \to \mathbb H (\C)$ be a matrix-valued nc function in $a$ and $x$. Suppose that for each $A \in \U$, it is the case that $F$ is both matrix convex in $x$ and $x$-real entire at $(C_A,0)$. Then $F$ is a matrix-valued nc polynomial of degree $2$ or less in $X$.
\end{corollary}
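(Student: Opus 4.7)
The idea is to apply Theorem \ref{main} at every $A\in\U$ and paste the resulting local representations together using uniqueness of the $x$-Taylor coefficients of $F$ at $x=0$.

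Because $\U$ is an nc set it is closed under direct sums and under unitary conjugation, so $A\in\U$ forces $C_A\subset\U$, and therefore $\U = \bigcup_{A\in\U}C_A$. Theorem \ref{main} applied at each such $A$ asserts that $F$ coincides with a matrix-valued nc polynomial of $x$-degree at most $2$ on all of $C_A$. Meanwhile, the $x$-real entire hypothesis furnishes an $x$-power series $F(a,x)=\sum_i F_i(a,x)$ at $x=0$ whose coefficients $F_i$ are matrix-valued nc polynomials homogeneous of $x$-degree $i$; these coefficients are intrinsic to $F$ by uniqueness of the $x$-Taylor expansion at $x=0$ for each fixed $a$. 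Matching this expansion against the truncation supplied by Theorem \ref{main} then forces $F_i$ to vanish on $C_A$ for every $A\in\U$ and every $i\ge 3$, so each such $F_i$ is an nc polynomial vanishing on the nc open set $\U=\bigcup_{A\in\U}C_A$.

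An nc polynomial that vanishes on an nc open set must be identically zero: at each matrix size $\kappa$, evaluation is a matrix-valued polynomial mapping on the finite-dimensional real vector space $\mathbb H(\C^{\kappa\times\kappa})^{\tilde g+g}$, and a polynomial mapping vanishing on a nonempty open subset is zero there, while $\kappa$ large enough separates nc polynomials of any fixed degree. Hence $F_i\equiv 0$ as an nc polynomial for every $i\ge 3$, and the $x$-power series for $F$ collapses to the nc polynomial $F_0+F_1+F_2$ of $x$-degree at most $2$. Equality of $F$ with this polynomial holds on $\U$ by $x$-real entirety, and extends to the full matrix universe $\mathbb H(\C)^{\tilde g}\times\SC$ by the nc identity principle applied to the nc function $F-(F_0+F_1+F_2)$, which vanishes on the nc open set $\U$ and is $x$-real entire there.

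The main obstacle is the gluing step in the middle paragraph: one must be sure that the nc polynomials produced by Theorem \ref{main} at different base points $A,A'\in\U$ can be identified with a single, intrinsic Taylor-coefficient polynomial attached to $F$, rather than distinct polynomials agreeing only on the (possibly empty) overlaps $C_A\cap C_{A'}$. This is precisely what the uniqueness of the $x$-Taylor expansion at $x=0$ provides, and once it is in hand the remainder of the argument is the standard observation that nc polynomials vanishing on nc open sets vanish identically.
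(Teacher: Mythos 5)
Your core argument tracks the paper's proof closely: apply Theorem \ref{main} at each $A \in \U$, observe that the higher $x$-Taylor coefficients vanish on all of $\U$ (which, being an nc set, satisfies $\U = \bigcup_{A\in\U}C_A$), and conclude they vanish identically as nc polynomials. You raise the gluing concern explicitly and resolve it via uniqueness of the $x$-Taylor expansion at $x=0$; the paper glosses over this point, so making it explicit is a genuine improvement in exposition, and the resolution is sound as far as the \emph{values} of the coefficient functions are concerned, which is what the rest of the argument needs.

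Two points of divergence, one cosmetic and one substantive. First, for the step ``an nc polynomial vanishing on an nc open set is the zero polynomial,'' you sketch a direct argument --- vanish on an open subset of each finite-dimensional real vector space $\mathbb H(\C^{\kappa\times\kappa})^{\tilde g+g}$, then invoke that large $\kappa$ separates nc polynomials of bounded degree. That last separation claim is exactly the nontrivial content; the paper handles it by splitting $F_k = R_k + iT_k$ into real and imaginary parts and appealing to Proposition A.7 of \cite{helmcvin06}, which in turn rests on Rowen \cite{rowen80}. Asserting it as a ``standard observation'' without a reference leaves a gap, though it is the same fact the paper uses.

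Second, and more seriously, your closing step invokes an ``nc identity principle'' to extend $F = F_0 + F_1 + F_2$ from $\U$ to the full matrix universe $\mathbb H(\C)^{\tilde g}\times\SC$. No such principle is available here. Globally, $F$ is only assumed to be an nc function (respecting direct sums and unitary conjugation); the analyticity and convexity hypotheses are local to $(C_A,0)$ for $A\in\U$. Respecting direct sums and unitary equivalence alone carries no rigidity that would force $F$ to agree with a polynomial at base points $a$ outside $\U$, and the $x$-real entire hypothesis says nothing there. What the paper's proof establishes --- and what you should stop at --- is $F_k = 0$ for $k>2$ as nc polynomials, whence $F(\alpha,X) = F_0(\alpha,X)+F_1(\alpha,X)+F_2(\alpha,X)$ wherever the $x$-power series (and its analytic continuation in $X$) represents $F$, namely for $\alpha\in\U$ and all $X$. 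Delete the final extension sentence.
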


\begin{proof}

By Theorem \ref{main}, for each $A \in \mathcal U$, we know that $F_k(A, X) = 0$ for all $(A, X)$ in an nc open set. Since $F_k$ is a polynomial, it is zero on all tuples $(A, X)$ of complex Hermitian matrix tuples.
Proposition A.7 in \cite[Appendix A]{helmcvin06} uses Rowen \cite{rowen80} to show that two rational functions $r_1$ and $r_2$ with real coefficients determine the same formal power series if and only if the functions agree for each $n$ on $\mathbb S(\R^{n\times n})^{\tilde{g} + g}$, the set of all symmetric $n \times n$ matrix tuples with real entries. Write $F_k= R_k  + i T_k$, where $R_k $ and $T_k$ are matrix polynomials with real coefficients. Since each $F_k$ is zero on $\mathbb H(\C^{n\times n})^{\tilde{g} + g}$ for each $n$ for $k>2$,
 and since $\mathbb S(\R^{n\times n})^{\tilde{g} + g} \subset \mathbb H(\C^{n \times n})^{\tilde{g} + g}$, consequently \cite[Proposition A.7]{helmcvin06}
 implies that $R_k =0$ and $T_k=0$ on $\mathbb S(\R^{n\times n})^{\tilde{g} + g}$, so $F_k=0$ for $k >2$.

\end{proof}

\section{Proofs}\label{proofs}

\subsection{The One-variable Proof}
The proof in the case of a function of one variable follows from the connection between matrix convexity and matrix monotonicity and from the integral representation of Pick functions. 

\begin{lemma}\label{onedimension}
	Let $f:\R \to \R$ be Hermitian matrix convex on an interval $(a,b)$. If $f$ has an analytic continuation to some open set $U \subset \C$ containing the real axis, i.e. $f$ is real entire, then $f$ is given by a polynomial of degree no more than 2.
\end{lemma}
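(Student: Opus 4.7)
My plan is to exploit the one-variable Kraus representation~\eqref{krauseq} together with the given entirety to force the representing measure to be a point mass at~$0$.

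First I would translate so that $0$ lies in the interior of~$(a,b)$ and pick a closed subinterval $[-r,r]\subset(a,b)$; on this subinterval $f$ is still matrix convex. After rescaling, Kraus's theorem provides a probability measure $\mu$ on $[-1,1]$ with
\[
f(t) = f(0) + f'(0)t + \tfrac{1}{2} f''(0) \int_{[-1,1]} \frac{t^2}{1-\lambda t}\,d\mu(\lambda)
\]
for $t$ in a neighborhood of~$0$. If $f''(0) = 0$ we are done immediately, so assume $f''(0) \neq 0$.

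Next I would invoke entirety. The left-hand side extends analytically to an open set containing~$\R$; the first two summands on the right are already entire. Hence the auxiliary function
\[
g(t) = \int_{[-1,1]} \frac{d\mu(\lambda)}{1-\lambda t},
\]
initially defined only for $t$ near~$0$, must extend to an analytic function on an open set in $\C$ containing~$\R$.

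The final, and most delicate, step is to deduce that $\mu$ is concentrated at~$0$. The substitution $s = 1/t$ gives $g(1/s) = s\int (s-\lambda)^{-1}\,d\mu(\lambda)$, so $g(1/s)/s$ is the classical Cauchy transform of~$\mu$. By the Stieltjes inversion formula the Cauchy transform determines $\mu$ uniquely, and in particular it can only be analytic on an open set disjoint from $\mathrm{supp}(\mu)$. Since $g$ is analytic on $\R\setminus\{0\}$, the Cauchy transform of $\mu$ is analytic on $\R\setminus\{0\}$, forcing $\mathrm{supp}(\mu)\subset\{0\}$ and hence $\mu = \delta_0$. Substituting back yields $f(t) = f(0) + f'(0)t + \tfrac{1}{2}f''(0)t^2$, a polynomial of degree at most~$2$.

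The main obstacle is precisely this last step: one must rule out the a priori possibility of subtle cancellations in the integral that could render $g$ entire even though $\mu$ has support away from~$0$. The resolution comes from the uniqueness of the Cauchy transform via Stieltjes inversion; without it, one could at best conclude that $g$ has no isolated real poles, which is far weaker than what is needed.
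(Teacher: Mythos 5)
Your argument is correct and reaches the conclusion, but it takes a genuinely different route from the paper's proof. The paper does \emph{not} use the Kraus representation~\eqref{krauseq} in proving Lemma~\ref{onedimension}: it divides out by $t$, invokes Bhatia's result that $\hat f(t)/t$ is operator monotone, applies L\"owner's theorem to continue this to a Pick function on $\C\setminus((-\infty,-1]\cup[1,\infty))$, writes down the Nevanlinna representation~\eqref{nevrep}, and then uses Donoghue's lemma to conclude the Nevanlinna measure has no mass on any real interval across which the function continues analytically with real boundary values. You instead start directly from Kraus, recognize $g(1/s)/s$ as the Cauchy transform $\int(s-\lambda)^{-1}\,d\mu(\lambda)$, and apply Stieltjes inversion to force $\operatorname{supp}\mu\subset\{0\}$. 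Both proofs ultimately rest on the same kernel fact --- a Cauchy-type transform of a positive measure that continues analytically across a real interval with \emph{real} boundary values has no mass on that interval --- but yours avoids the detour through operator monotonicity and L\"owner, at the cost of taking the Kraus representation as a black box (as the paper itself does in its introduction). Two points worth making explicit in a final write-up: (i) the analytic continuation of $g$ is real on $\R$ (it is built from $f:\R\to\R$), and this reality is essential --- without it, a measure with real-analytic density would also yield an analytic continuation of the Cauchy transform across its support, just with nonreal boundary values, so ``analytic'' alone does not determine the support; (ii) you should spell out the identity-theorem argument showing that the entire continuation of $g$ coming from $f$ agrees with $\int(1-\lambda t)^{-1}\,d\mu(\lambda)$ on the relevant connected region, since a priori the domain $U$ of analyticity is only a thin neighborhood of $\R$ and need not contain the half-planes where the Cauchy transform naturally lives.
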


\begin{proof}
	Let $f$ be matrix convex on $(a,b)$. Without loss of generality, this interval can be assumed to be $\mathcal I = (-1, 1)$. Now suppose that $f$ has an analytic extension to an open set $U$ containing the real line. This set can be assumed to contain $\cc{z}$ for any $z \in U$, as some open subset of $U$ will have this property. As $f$ is real-valued on $\mathcal I$ and holomorphic on $U$, we have $f(z) = \cc{f(\cc{z})}$. Note that the function $\hat{f}(t) = f(t) - f(0)$ is matrix convex on $\mathcal I$ if and only if $f(t)$ is matrix convex on $\mathcal I$. Define a function $g$ by
		\beq \label{gdef} g(t) = \frac{\hat{f}(t)}{t}. \eeq
	As $\hat{f}$ is holomorphic on $U$ and $\hat{f}(0) = 0$, $g$ is holomorphic on $U$. As $f$ is real valued on $\mathcal I$, so is $g$, and thus $\cc{g(\cc{z})} = g(z)$ for all $z \in U$.

	By \cite[Corollary V.3.11]{bha97}, the restriction $g_{\mathcal I}(t)$ of $g(z)$ to $\mathcal I$ is operator monotone. 
Then by L\"owner's Theorem (see e.g. \cite[IX]{don74} or \cite[Theorem V.4.7]{bha97}), $g_{\mathcal I}(t)$ can be analytically continued on all of $\C$ off of the set $(-\infty, -1]\cup[1,\infty)$ to a function $g_{\mathcal I}(z)$ which is a Pick function, i.e. that maps the upper halfplane and the lower halfplane into themselves. 

	As $g(z)$ and $g_{\mathcal I}(z)$ agree on $\mathcal I$, the identity theorem gives $g(z) = g_{\mathcal I}(z)$ on $U \backslash ((-\infty, -1]\cup[1,\infty))$ and thus that $g_{\mathcal I}(z)$ gives an analytic continuation of $g(z)$ off of $U$ (that is to the rest of the complex plane), and for all $z \in \C, g(z) = \cc{g(\cc{z})}$.

	As a Pick function, (see e.g. \cite[II.2 Theorem 1]{don74}), $g$ admits a unique canonical integral representation
		\beq \label{nevrep} g(z) = \alpha z + \beta + \int_\R \left(\frac{1}{\la - z} - \frac{\la}{\la^2 + 1}\right) \,\,\dd\mu(\la), \eeq
	where $\alpha \geq 0, \beta \in \R$, and $\mu$ is a positive Borel measure with 
		$$\int_\R \frac{1}{\la^2 + 1}\,\dd\mu(\la) < +\infty.$$
	Let $(a,b) \subset \R$ be any real interval. The function $g(z)$ is real-valued on $(a, b)$ and continues through $(a, b)$ to the lower half-plane by reflection, as $g(z) = \cc{g(\cc{z})}$ for all $z$. By \cite[II.2 Lemma 2]{don74}, the measure $\mu$ in \eqref{nevrep} has no mass on $(a, b)$. Thus for any choice of interval $(a, b) \subset \R$, for all measureable $E \subset (a,b)$, we have $\mu(E) = 0$, and thus $\mu$ is the $0$ measure. It follows that $g(z) = \alpha z + \beta$, and therefore that 
		$$f(z) = \alpha z^2 + \beta z + f(0),$$
	i.e. $f$ is a polynomial of degree $2$ or less (as $\alpha$, $\beta$ could have been $0$ in equation \ref{nevrep}).
\end{proof}

\subsection{Proof of Theorem \ref{main}}\label{mainproof}

This subsection is devoted to proving the main theorem of the paper.

\begin{proof}
Let $F$ be an $N\times N$ matrix-valued nc function given by
	$$F(a,x) = \bbm F_{11}(a, x) & \cdots & F_{1N}(a,x) \\ \vdots & \ddots & \vdots \\ F_{N1}(a,x) & \cdots & F_{NN}(a,x) \ebm$$ for nc functions $F_{ij}$. 
	Suppose that $F$ is $x$-real entire at $(C_A,0)$ and matrix convex in $x$ at $(C_A,0)$ for some fixed $A \in \SCk^{\tilde{g}}$. As $F$ is $x$-real entire at $(C_A,0)$, by Section \ref{analytic} property \eqref{Xanal}, there exists an $\varepsilon_{1}>0$ such that $F$ has an $x$-power series 
		\beq\label{powerseriesAXproof} F(a,x) = \sum_i F_i(a,x), \eeq
	where the $F_i$ are matrix-valued nc polynomials homogeneous of degree $i$ in $x$, that for each $\alpha \in C_A$ converges on the $x$-ball of radius $\varepsilon_1$ about $(\alpha,0)$. As $F$ is matrix convex in $x$ at $(C_A,0)$, there exists $\varepsilon_2>0$ so that $F$ is matrix convex in $x$ at $(C_A,0)$ in an $x$-ball of radius $\varepsilon_2$. Let $\varepsilon = \min \{\varepsilon_{1}, \varepsilon_{2}\}$. Fix $X \in \SCk^g$ such that $\norm{X} < \frac{1}{2}\varepsilon$. \black

	Define a function $\Phi$ of one nc variable by introducing a matrix parameter $\xi$ via 
		$$\Phi^{A,X}(\xi) = F(A \otimes I, X_1\otimes \xi, \ldots, X_g \otimes \xi),$$
	where the dimension of the identity $I$ matches the dimension of $\xi$. As $(A, X) \in \SCk^{\tilde{g}} \times \SCk^g$, given a vector $v\in \C^{N\kappa}$, we can define a function into $\C$ by
		\beq \label{conj} \ph^{A,X}_v(\xi) = (v\ad \otimes I) \Phi^{A,X}(\xi) (v \otimes I).\eeq

	We wish to apply Lemma \ref{onedimension} to the function $\phax$ in \eqref{conj}. To do so, we first show that on the level of scalars, when $\xi = z$, the function $\phax(\xi)$ is real entire in the classical sense of a function of one complex variable. In this case, $\phax:\C \to \C$ is given by
		\beq \phax(z) = v\ad F(A, zX) v. \eeq
	Let $s \in \R$. Then $sX \in \SCk^g$, and so 
since $F$ is $x$-real entire, by Section \ref{analytic} \eqref{Xent}, there exists $\varepsilon_{A, sX}$ so that the power series \eqref{powerseriesAXproof} for $F$ has an analytic continuation as a function of $g\kappa^2$ complex variables to the set $\{(A, Z) : \norm{Z - sX} < \varepsilon_{A, X}\}$. Let $\delta_s = \varepsilon_{A, sX}\norm{X}^{-1}$. Then $\phax(z)$ is analytic on the complex $\delta_s$-ball about $s$. As $s \in \R$ was arbitrary, $\phax(z)$ is a real entire function of one complex variable, i.e. analytic on a complex domain containing the real axis in $\C$. 

	Let $0 < \delta < 1$. We now show that $\phax(z)$ is matrix convex on the interval $(1 - \delta, 1 + \delta)$. For any $T \in \mathbb H(\C^{m\times m})$ with 
		\beq \label{tcond} (1 - \delta)I_m \prec T \prec  (1 + \delta)I_m,\eeq
	we have
		$$\norm{T - I_m} < \delta.$$
	Then
	\begin{align*}
		&\norm{(X_1 \otimes T, \ldots, X_g \otimes T)} \\
		&= \norm{(X_1 \otimes (T - I_m + I_m), \ldots, X_g \otimes (T - I_m + I_m))} \\
		&= \norm{(X_1 \otimes (T - I_m), \ldots, X_g \otimes (T - I_m)) + (X_1 \otimes I_m, \ldots, X_g \otimes I_m)} \\
		&< \norm{X} \norm{T - I_m} + \norm{X} \\
		&< (1 + \delta) \frac{\varepsilon}{2} \\
		&< \varepsilon.
	\end{align*}

	This estimate holds uniformly for $\delta$ for each level $m$. For each $m$, note that $A\otimes I_m \in C_A$ (see equation \eqref{zclosure}). 
	Thus for all $m$, for $T, \tilde{T}$ of size $m$ satisfying \eqref{tcond},
	\begin{align}
		\phax(t T &+ (1-t)\tilde{T}) = (v\ad \otimes I) \Phi^{A,X}(t T + (1-t)\tilde{T}) (v \otimes I) \notag \\
		&=  (v\ad \otimes I) F(A \otimes I, X \otimes (t T + (1-t)\tilde{T})) (v \otimes I) \notag \\
		&= (v\ad \otimes I)  F(A \otimes I, t(X \otimes T) + (1-t)(X \otimes \tilde{T}))  (v \otimes I) \notag \\
		&\preceq (v\ad \otimes I)\bigg[ t F(A \otimes I, (X \otimes T)) \phantom] \notag \\ & \hspace{.5in} \phantom[+(1-t)F(A \otimes I, (X \otimes \tilde{T}))\bigg] (v \otimes I) \label{Fconvex} \\
		&= t\phax(T) + (1-t)\phax(\tilde{T}), \notag
	\end{align}
	where relation \eqref{Fconvex} follows from the Hermitian matrix convexity of $F$ in $x$ at $(C_A, 0)$. 
	We have shown that the function $\phax(z)$ is both real entire and matrix convex on the interval $(1 - \delta, 1 + \delta)$. Therefore, by Lemma \ref{onedimension}, $\phax(z)$ is a polynomial in $z$ of degree $2$ or less. 

	Near $(A, 0)$, the function $F$ has the convergent power series \eqref{powerseriesAXproof}. Recall that each $F_i$ is a matrix-valued nc polynomial homogeneous of degree $i$ in $x$. Then for $\abs{z} < \varepsilon$, the homogeneity of the entries gives
	\begin{align*}
		\phax(z) &= v\ad F(A, zX) v \\
		&= v\ad\left[ \sum_i F_i(A, zX)\right] v \\
		&= \sum_i (v\ad F_i(A, X) v) z^i.
	\end{align*}
	As $\phax$ is a polynomial in $z$ of degree $2$ or less, for $i > 2$, we have $v\ad F_i(A, X) v = 0$. 
	Since the previous equation holds for all $v \in \C^{N\kappa}$, we get 
		$$ F_i(A, X) = 0 \text{ for } i > 2. $$	
As $X$ was arbitrary in the $x$-ball of radius $\varepsilon/2$ about $(A, 0)$, we get that each polynomial $F_i = 0$ on an open $x$-ball near $(A, 0)$ and thus for all $X$. Thus, for all $X \in \SCk^g$, we have
		\beq \label{penultimate} F(A, X) = F_0(A, X) + F_1(A, X) + F_2(A, X). \eeq

Now let $\alpha$ be an arbitrary element in $C_A$, so that for some unitary $U$, we can write $\alpha = U\ad (I_m \otimes A) U \in \mathbb H(\C^{\kappa m \times \kappa m})^{\tilde{g}}$. Since $C_\alpha \subseteq C_A$, it follows that $F$ is matrix convex in $x$ and $x$-real entire at $(C_\alpha, 0)$. Following the same argument as above with $\alpha$ replacing $A$ gives that for each $\alpha \in C_A$,
$$F(\alpha, X) = F_0(\alpha, X) + F_1(\alpha, X) + F_2(\alpha, X)$$
for all $X \in \mathbb H (\C^{\kappa m \times \kappa m})^g$.  
\end{proof}

\newpage

\bibliography{references}

\bibliographystyle{alpha}

\newpage

\centerline{NOT FOR PUBLICATION}

\tableofcontents

\printindex 

\end{document}